\numberwithin{equation}{section}
\newtheorem{proposition}{Proposition}[section]
\newtheorem{theorem}[proposition]{Theorem}
\newtheorem{corollary}[proposition]{Corollary}
\theoremstyle{definition}
\definecolor{gr}{rgb}   {0., 0.8, 0. } 
\definecolor{bl}{rgb}   {0., 0.5, 1. } 
\definecolor{mg}{rgb}   {0.7, 0., 0.7} 
\begin{document}

\title{A three lines proof for traces of $H^1$ functions\\ on special Lipschitz domains}

\author{Sylvie Monniaux}

\date{Aix-Marseille Universit\'e, CNRS, Centrale Marseille, I2M, UMR 7373\\
13453 Marseille, France}

\maketitle


\section{Introduction}

It is well known (see \cite[Theorem~1.2]{Necas}) that for a bounded Lipschitz domain 
$\Omega\subset{\mathbb{R}}^n$, the trace operator 
${\rm Tr}_{|_{\partial\Omega}}\,:{\mathscr{C}}(\overline{\Omega})\to{\mathscr{C}}(\partial\Omega)$ 
restricted to ${\mathscr{C}}(\overline{\Omega})\cap H^1(\Omega)$
extends to a bounded operator from $H^1(\Omega)$ to $L^2(\partial\Omega)$ and the following
estimate holds:
\begin{equation}
\label{eq1}
\|{\rm Tr}_{|_{\partial\Omega}}\,u\|_{L^2(\partial\Omega)}\le 
C\,\bigl(\|u\|_{L^2(\Omega)}+\|\nabla u\|_{L^2(\Omega,{\mathbb{R}}^n)}\bigr)
\qquad \mbox{for all }u\in H^1(\Omega),
\end{equation}
where $C=C(\Omega)>0$ is a constant depending on the domain $\Omega$. 
This result can be proved via a simple integration by parts and Cauchy-Schwarz inequality 
if the domain is the upper graph of a Lipschitz function, i.e.,
\begin{equation}
\label{omega}
\Omega=\bigl\{x=(x_h,x_n)\in{\mathbb{R}}^{n-1}\times{\mathbb{R}} ; x_n>\omega(x_h)\bigr\}
\end{equation}
where $\omega:{\mathbb{R}}^{n-1}\to{\mathbb{R}}$ is a globally Lipschitz function.

\section{The result}

Let $\Omega\subset{\mathbb{R}}^n$ be a domain of the form \eqref{omega}. The exterior unit 
normal $\nu$ of $\Omega$ at a point $x=(x_h,\omega(x_h))$ on the boundary $\Gamma$ of 
$\Omega$:
$$
\Gamma:=\bigl\{x=(x_h,x_n)\in{\mathbb{R}}^{n-1}\times{\mathbb{R}} ; x_n=\omega(x_h)\bigr\}
$$
is given by
$$
\nu(x_h,\omega(x_h))=\frac{1}{\sqrt{1+|\nabla_h\omega(x_h)|^2}} \,(\nabla_h\omega(x_h),-1)
$$
($\nabla_h$ denotes the ``horizontal gradient'' on ${\mathbb{R}}^{n-1}$ acting on the ``horizontal 
variable'' $x_h$). We denote by $\theta\in[0,\frac{\pi}{2})$ the angle 
\begin{equation}
\label{theta}
\theta=\arccos\,\Bigl(\inf_{x_h\in{\mathbb{R}}^{n-1}}\frac{1}{\sqrt{1+|\nabla_h\omega(x_h)|^2}}\Bigr),
\end{equation}
so that in particular for $e=(0_{{\mathbb{R}}^{n-1}},1)$ the ``vertical'' direction, we have
\begin{equation}
\label{costheta}
-e\cdot \nu(x_h,\omega(x_h))=\frac{1}{\sqrt{1+|\nabla_h\omega(x_h)|^2}}\ge\cos\theta>0,\quad
\mbox{for all }x_h\in{\mathbb{R}}^{n-1}.
\end{equation}

\begin{theorem}
\label{H1}
Let $\Omega\subset{\mathbb{R}}^n$ be as above. 
Let $\varphi:{\mathbb{R}}^n\to{\mathbb{R}}$ be a smooth function with compact support. 
Then 
\begin{equation}
\label{traceH1}
\int_\Gamma|\varphi|^2\,{\rm d}\sigma\le 
\frac{2}{\cos\theta}\,\|\varphi\|_{L^2(\Omega)}\|\nabla\varphi\|_{L^2(\Omega,{\mathbb{R}}^n)},
\end{equation}
where $\theta$ has been defined in \eqref{theta}. 
\end{theorem}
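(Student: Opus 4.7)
The title promises a three-line proof, so the plan is to apply the divergence theorem to a single, well-chosen vector field and then invoke Cauchy--Schwarz. The natural choice is the constant vertical field $-e = (0_{\mathbb{R}^{n-1}},-1)$ weighted by $|\varphi|^2$, precisely because inequality \eqref{costheta} gives a uniform lower bound for $-e\cdot\nu$ on $\Gamma$.

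Concretely, I would compute
\[
\mathrm{div}\bigl(-|\varphi|^2\,e\bigr)=-\partial_n(|\varphi|^2)=-2\varphi\,\partial_n\varphi,
\]
and apply the divergence theorem to the compactly supported field $-|\varphi|^2 e$ on $\Omega$ (which is valid because $\Omega$ is a special Lipschitz domain and $\varphi$ is smooth and compactly supported):
\[
\int_\Gamma |\varphi|^2\,(-e\cdot\nu)\,\mathrm{d}\sigma
=-2\int_\Omega \varphi\,\partial_n\varphi\,\mathrm{d}x.
\]
Using \eqref{costheta} to bound $-e\cdot\nu\ge\cos\theta$ on the left-hand side, and then the Cauchy--Schwarz inequality together with $|\partial_n\varphi|\le|\nabla\varphi|$ on the right-hand side, yields
\[
\cos\theta\int_\Gamma |\varphi|^2\,\mathrm{d}\sigma
\le 2\int_\Omega |\varphi|\,|\partial_n\varphi|\,\mathrm{d}x
\le 2\,\|\varphi\|_{L^2(\Omega)}\,\|\nabla\varphi\|_{L^2(\Omega,\mathbb{R}^n)},
\]
which is \eqref{traceH1} after dividing by $\cos\theta>0$.

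There is essentially no obstacle: the only subtle point is justifying the divergence theorem on the unbounded domain $\Omega$, but since $\varphi$ (hence $|\varphi|^2 e$) has compact support, the integration takes place in a bounded Lipschitz piece of $\Omega$, where Stokes's formula is standard. The strength of the argument is that all the geometry of $\Omega$ is absorbed into the single scalar quantity $\cos\theta$ via the uniform estimate \eqref{costheta}, so no partition of unity or local flattening of the boundary is required.
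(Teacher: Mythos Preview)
Your proof is correct and is essentially identical to the paper's own argument: apply the divergence theorem to $\varphi^2 e$ (equivalently $-|\varphi|^2 e$), use the lower bound $-e\cdot\nu\ge\cos\theta$ from \eqref{costheta}, and conclude with Cauchy--Schwarz. The only additions you make are the explicit remark that compact support justifies Stokes on the unbounded $\Omega$ and the intermediate step $|\partial_n\varphi|\le|\nabla\varphi|$, both of which are implicit in the paper.
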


\begin{proof}
Let $\varphi:{\mathbb{R}}^n\to{\mathbb{R}}$ be a smooth function with compact support, 
and apply the divergence theorem in $\Omega$ with $u=\varphi^2\,e$ where 
$e=(0_{{\mathbb{R}}^{n-1}},1)$. Since 
${\rm div}\,(\varphi^2\,e)=2\,\varphi\,(e\cdot\nabla\varphi)$, we obtain
$$
\int_\Omega 2\,\varphi\,(e\cdot\nabla\varphi)\,{\rm d}x = 
\int_\Omega{\rm div}\,(\varphi^2\,e)\,{\rm d}x=
\int_\Gamma\nu\cdot(\varphi^2\,e)\,{\rm d}\sigma.
$$
Therefore using \eqref{costheta} and Cauchy-Schwarz inequality, we get
$$
\cos \theta \int_\Gamma \varphi^2\,{\rm d}\sigma
\le -2\,\int_\Omega \varphi\,(e\cdot\nabla\varphi)\,{\rm d}x
\le 2\,\|\varphi\|_{L^2(\Omega)}\|\nabla\varphi\|_{L^2(\Omega,{\mathbb{R}}^n)},
$$
which gives the estimate \eqref{traceH1}.
\end{proof}

\begin{corollary}
There exists a unique operator $T\in {\mathscr{L}}(H^1(\Omega),L^2(\Gamma))$ satisfying
$$
T\varphi={\rm Tr}_{|_{\Gamma}}\varphi,\quad \mbox{for all }
\varphi\in H^1(\Omega)\cap{\mathscr{C}}(\overline{\Omega})
$$
and 
\begin{equation}
\label{normT}
\|T\|_{{\mathscr{L}}(H^1(\Omega),L^2(\Gamma))}\le \frac{1}{\sqrt{\cos\theta}}.
\end{equation}
\end{corollary}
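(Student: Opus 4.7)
The plan is to deduce the corollary from Theorem~\ref{H1} by the usual density-plus-extension-by-continuity argument, so the real work is concentrated in two small steps: converting the mixed estimate \eqref{traceH1} into a clean $H^1$-norm bound, and verifying that the smooth compactly supported functions on ${\mathbb{R}}^n$ form a dense subset of $H^1(\Omega)$ when restricted to $\Omega$.

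First I would rewrite \eqref{traceH1} in a form that sees the $H^1(\Omega)$ norm directly. For any smooth compactly supported $\varphi:{\mathbb{R}}^n\to{\mathbb{R}}$, the elementary inequality $2ab\le a^2+b^2$ applied to $a=\|\varphi\|_{L^2(\Omega)}$ and $b=\|\nabla\varphi\|_{L^2(\Omega,{\mathbb{R}}^n)}$ together with \eqref{traceH1} yields
$$
\int_\Gamma|\varphi|^2\,{\rm d}\sigma\le\frac{1}{\cos\theta}\bigl(\|\varphi\|_{L^2(\Omega)}^2+\|\nabla\varphi\|_{L^2(\Omega,{\mathbb{R}}^n)}^2\bigr)=\frac{1}{\cos\theta}\|\varphi\|_{H^1(\Omega)}^2,
$$
so the restriction map $\varphi\mapsto\varphi_{|\Gamma}$, defined on the subspace ${\mathscr{D}}:=\{\varphi_{|\Omega};\varphi\in{\mathscr{C}}_c^\infty({\mathbb{R}}^n)\}$ of $H^1(\Omega)$, is linear and continuous with norm at most $1/\sqrt{\cos\theta}$ into $L^2(\Gamma)$.

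Next I would establish that ${\mathscr{D}}$ is dense in $H^1(\Omega)$. Since $\Omega$ is the strict upper graph of the globally Lipschitz function $\omega$, it admits a bounded Stein-type extension operator $E:H^1(\Omega)\to H^1({\mathbb{R}}^n)$ (for instance the reflection $(x_h,x_n)\mapsto(x_h,2\omega(x_h)-x_n)$ composed with multiplication by a cut-off), and ${\mathscr{C}}_c^\infty({\mathbb{R}}^n)$ is dense in $H^1({\mathbb{R}}^n)$ by standard truncation and mollification. Given $u\in H^1(\Omega)$ and a sequence $\varphi_k\in{\mathscr{C}}_c^\infty({\mathbb{R}}^n)$ with $\varphi_k\to Eu$ in $H^1({\mathbb{R}}^n)$, the restrictions $\varphi_{k|\Omega}$ converge to $u$ in $H^1(\Omega)$, which gives the required density.

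With density in hand, the bounded linear operator defined on ${\mathscr{D}}$ extends uniquely to a bounded operator $T\in{\mathscr{L}}(H^1(\Omega),L^2(\Gamma))$ satisfying the norm bound \eqref{normT}. For $\varphi\in H^1(\Omega)\cap{\mathscr{C}}(\overline{\Omega})$ the equality $T\varphi={\rm Tr}_{|\Gamma}\varphi$ follows by approximating $\varphi$ by elements of ${\mathscr{D}}$ that also converge uniformly on $\overline{\Omega}$ (which can be arranged by a standard mollification of a continuous extension of $\varphi$), so both the $L^2(\Gamma)$ and the pointwise limits agree. The only delicate point in the whole argument is the density step, and on a special Lipschitz domain it is handled cleanly by the reflection-extension operator.
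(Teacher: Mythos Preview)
Your proof is correct and follows essentially the same route as the paper's: apply $2ab\le a^2+b^2$ to upgrade \eqref{traceH1} to an $H^1$-norm bound, invoke density of smooth compactly supported functions in $H^1(\Omega)$, and extend by continuity. The paper simply cites a reference for the density step where you sketch an extension-operator argument, and it leaves the verification of $T\varphi={\rm Tr}_{|_\Gamma}\varphi$ on $H^1(\Omega)\cap{\mathscr{C}}(\overline{\Omega})$ implicit, but the approach is the same.
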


\begin{proof}
The existence and uniqueness of the operator $T$ follow from Theorem~\ref{H1} the density of 
${\mathscr{C}}_c^\infty(\overline{\Omega})$ in $H^1(\Omega)$ 
(see, e.g., \cite[Theorem~4.7, p.\,248]{EE87}). Moreover, \eqref{traceH1} implies
$$
\|\varphi\|_{L^2(\Gamma,{\rm d}\sigma)}^2\le 
\frac{1}{\cos\theta}\,\bigl(\|\varphi\|_{L^2(\Omega)}^2+
\|\nabla\varphi\|_{L^2(\Omega,{\mathbb{R}}^n)}^2\bigr),
\quad \mbox{for all }\varphi\in {\mathscr{C}}_c^\infty(\overline{\Omega}),
$$
which proves \eqref{normT}.
\end{proof}

{\small
\bibliographystyle{amsplain}

}
\end{document}